\documentclass[a4paper,12pt,final]{amsart}
\usepackage{times,a4wide,mathrsfs,amssymb,dsfont}

\newcommand{\C}{\mathbb{C}}

\newcommand{\QQ}{\mathbb{Q}}
\newcommand{\NN}{\mathbb{N}}
\newcommand{\PP}{\mathbb{P}}

\newcommand{\OO}{\mathcal O}
\newcommand{\Ss}{\mathcal S}

\newcommand{\XX}{\mathcal X}

\newcommand{\LLL}{\mathcal L}
\newcommand{\NNN}{\mathcal N}

\newcommand{\MM}{\mathcal M}
\newcommand{\FF}{\mathcal F}

\newcommand{\wt}{\widetilde}
\newcommand{\ima}{\hbox{Im}}
\newcommand{\rom}{\romannumeral}

\newtheorem{theorem}{Theorem}[section]
\newtheorem{claim}[theorem]{Claim}
\newtheorem{lemma}[theorem]{Lemma}
\newtheorem{corollary}[theorem]{Corollary}
\newtheorem{proposition}[theorem]{Proposition}
\newtheorem{conjecture}[theorem]{Conjecture}
\newtheorem{nonumbering}{Theorem}

\newtheorem{nonumberingc}{Corollary}

\newtheorem{convention}{Conventions}

\theoremstyle{definition}
\newtheorem{remark}[theorem]{Remark}
\newtheorem{definition}[theorem]{Definition}

\newtheorem{nonumberingt}{Acknowledgements}

\begin{document}
\author[Robert Laterveer]
{Robert Laterveer}

\address{Institut de Recherche Math\'ematique Avanc\'ee,
CNRS -- Universit\'e 
de Strasbourg,\
7 Rue Ren\'e Des\-car\-tes, 67084 Strasbourg CEDEX,
FRANCE.}
\email{robert.laterveer@math.unistra.fr}

\title{On the Chow ring of certain Fano fourfolds}

\begin{abstract} We prove that certain Fano fourfolds of K3 type constructed by 
Fatighenti--Mongardi have a multiplicative Chow--K\"unneth decomposition. We present some consequences for the Chow ring of these fourfolds.
\end{abstract}

\keywords{Algebraic cycles, Chow ring, motives, Beauville ``splitting property'', Fano variety, K3 surface}

\subjclass{Primary 14C15, 14C25, 14C30.}

\maketitle

\section{Introduction}

This note is part of a program aimed at understanding the class of varieties admitting a {\em multiplicative Chow--K\"unneth decomposition\/}, in the sense of \cite{SV}.
The concept of multiplicative Chow--K\"unneth decomposition was introduced in order to better understand the (conjectural) behaviour of the Chow ring of hyperk\"ahler varieties, while also providing a systematic explanation of the peculiar behaviour of the Chow ring of K3 surfaces and abelian varieties.
In \cite{S2}, the following conjecture is raised:

\begin{conjecture}\label{conj} Let $X$ be a smooth projective Fano variety of K3 type (i.e. $\dim X=2m$ and the Hodge numbers $h^{p,q}(X)$ are $0$ for all $p\not=q$ except for $h^{m-1,m+1}(X)=h^{m+1,m-1}(X)=1$). Then $X$ has a multiplicative Chow--K\"unneth decomposition.
\end{conjecture}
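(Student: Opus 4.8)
The statement as phrased is an open conjecture, so the realistic goal is to establish it for the explicit families at hand; the plan is to reduce the multiplicative Chow--K\"unneth property of each such fourfold $X$ to the already-known multiplicative Chow--K\"unneth property of an associated K3 surface (or cubic fourfold), and to bridge the two Chow rings by a Franchetta-type argument. First I would exploit the Fatighenti--Mongardi construction of $X$ (as a zero locus of a section of a homogeneous vector bundle, or as a linear section of a rational homogeneous space) to obtain a decomposition of the Chow motive
\begin{equation*}
\hh(X)\;=\;\Big(\bigoplus_i \LL^{a_i}\Big)\,\oplus\,\ttt(X),
\end{equation*}
in which all but one summand are Tate (Lefschetz) motives and the remaining summand $\ttt(X)$ is of K3 type, i.e. carries the single pair $h^{m-1,m+1}=h^{m+1,m-1}=1$. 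The tools here are the projective-bundle and blow-up formulae for motives together with the motivic description of zero loci; in the favourable cases the same construction furnishes an explicit correspondence identifying $\ttt(X)\cong\ttt_2(S)(r)$ for a K3 surface $S$ (respectively the transcendental motive of a cubic fourfold).

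The motive decomposition yields a candidate Chow--K\"unneth decomposition $\{\pi_X^j\}$, placing the Tate summands in their (even) degrees and $\ttt(X)$ in the middle degree $2m$. Its multiplicativity amounts to the vanishing
\begin{equation*}
\pi_X^k\circ\delta_X\circ(\pi_X^i\otimes\pi_X^j)=0\qquad(i+j\neq k),
\end{equation*}
where $\delta_X\in\mathrm{CH}^{2\dim X}(X\times X\times X)_\QQ$ is the class of the small diagonal. Since the Tate summands and $\ttt_2(S)$ separately carry a multiplicative Chow--K\"unneth decomposition --- the former trivially, the latter because K3 surfaces are known to possess one --- the only content is that these pieces interact correctly under the intersection product. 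In cohomology this is automatic: the K3-type Hodge structure forces the cup product landing in, or emanating from, the transcendental part to follow exactly the K3 pattern, so the displayed relations hold modulo homological equivalence.

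The main obstacle is to upgrade these cohomological relations to relations in the Chow ring. For this I would prove the \emph{Franchetta property} for the universal family $\pi\colon\XX\to B$ over the parameter space $B$ of the construction, and for the relative squares $\XX\times_B\XX$: every cycle that is fibrewise homologically trivial is fibrewise rationally trivial. Granting this, each $\pi_X^i$, the small diagonal $\delta_X$, and hence the compositions above are \emph{generically defined}, so their vanishing in cohomology forces their vanishing in $\mathrm{CH}^\ast(X)_\QQ$, giving the multiplicative Chow--K\"unneth decomposition. Establishing the Franchetta property is where the real work lies: I would deduce it by transporting, along the correspondence of the first step, the known Franchetta property of the universal K3 surface (or universal cubic fourfold), controlling the Tate part by hand and handling the spreading-out of cycles via a stratification of $B$ into loci over which $\XX$ acquires extra structure. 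A secondary difficulty is verifying that the correspondence realizing $\ttt(X)\cong\ttt_2(S)(r)$ is itself generically defined, hence compatible with the relative setting; once that is in place, the consequences for the Chow ring announced in the abstract follow from the formalism of the Beauville ``splitting property''.
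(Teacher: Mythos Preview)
Your proposal correctly recognises that the conjecture is open and sensibly targets the specific families; the overall philosophy---link $X$ to a K3 surface and invoke a Franchetta-type argument---is the paper's as well. But the mechanisms differ in an important way.

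The paper does not verify the small-diagonal relations directly. Instead it uses Lemma~\ref{blow} to identify the general $X$ with the blow-up of $M$ (either $\PP^3\times\PP^1$ or $\operatorname{Gr}(2,4)$) along a K3 surface $S$, and then applies the Shen--Vial blow-up criterion (Proposition~\ref{blowup}): if $M$ and $S$ have MCK decompositions and the Chern classes of $\NNN_{S/M}$ and the graph of $S\hookrightarrow M$ lie in $A^\ast_{(0)}$, then the blow-up inherits an MCK decomposition. The Franchetta property enters only to check these compatibility conditions, and only for the family of K3 surfaces $\Ss\to B_0$ (Claim~\ref{gfc})---not for $\XX$ or its fibre powers. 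A spreading argument then extends from $B_0$ to all of $B$.

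Your route, by contrast, would require the Franchetta property for $\XX\times_B\XX\times_B\XX$ (not merely $\XX\times_B\XX$, as you write): the relation $\pi^k_X\circ\Delta_X^{sm}\circ(\pi^i_X\otimes\pi^j_X)=0$ is a class in $A^{2n}(X^3)$, and upgrading its homological vanishing to rational equivalence needs control over generically defined cycles on the triple fibre product. Transporting this to the K3 side would in turn require Franchetta for $\Ss\times_{B_0}\Ss\times_{B_0}\Ss$, which is the genuinely harder ``generalized Franchetta'' statement of \cite{FLV} and is not proved here. The point of the paper's approach is precisely that the blow-up MCK machinery packages the multiplicativity check into conditions living on $S$ and on $S\times M$ alone, where the ordinary Franchetta property for $\Ss\to B_0$ suffices. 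So your plan is not wrong in spirit, but it outsources the hard step to a stronger input than is actually needed or available.
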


This conjecture is verified in some special cases \cite{Ver}, \cite{d3}, \cite{S2}.
The aim of the present note is to provide some more evidence for Conjecture \ref{conj}. We consider two families of Fano fourfolds of K3 type (these are the families labelled B1 and B2 in \cite{FM}).

 \begin{nonumbering}[=Theorem \ref{main}] Let $X$ be a smooth fourfold of one of the following types:
  \begin{itemize}
  \item a hypersurface of multidegree $(2,1,1)$ in $M=\PP^3\times\PP^1\times\PP^1$;
  
  \item a hypersurface of multidegree $(2,1)$ in $M=\operatorname{Gr}(2,4)\times\PP^1$ (with respect to the Pl\"ucker embedding).
  \end{itemize}
  Then $X$ has a multiplicative Chow--K\"unneth decomposition.
 \end{nonumbering}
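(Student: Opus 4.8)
The plan is to realise each fourfold as a blow-up of a variety with trivial Chow groups along an embedded K3 surface, and then to transport the multiplicative Chow--K\"unneth (MCK) property of \cite{SV} across the blow-up. \emph{Geometric reduction.} For the first family I would write the defining equation as $t_0 L_0(x,s)+t_1 L_1(x,s)=0$, where $[t_0:t_1]$ are the coordinates on the second $\PP^1$ and $L_0,L_1$ are forms of bidegree $(2,1)$ on $\PP^3\times\PP^1$. Projecting away this last factor exhibits $X$ as the blow-up of $\PP^3\times\PP^1$ along the base locus $W=\{L_0=L_1=0\}$, a smooth surface cut out by two divisors of class $(2,1)$; adjunction gives $K_W=\OO_W$ and the Lefschetz theorem gives $H^1(W)=0$, so $W$ is a K3 surface. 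For the second family I would write the equation as $u_0 Q_0(p)+u_1 Q_1(p)=0$ with $Q_0,Q_1$ quadrics in the Pl\"ucker coordinates; projecting away the $\PP^1$ exhibits $X$ as the blow-up of $\operatorname{Gr}(2,4)$ along $S=\{Q_0=Q_1=0\}$. Since $\operatorname{Gr}(2,4)\subset\PP^5$ is a quadric, $S$ is a $(2,2,2)$ complete intersection in $\PP^5$, hence again a K3 surface. (For the first family one may alternatively project to $\PP^3$ and present $X$ as a conic bundle degenerating over a quartic K3; I will use the uniform blow-up description.)

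\emph{Motivic set-up.} Both ambient varieties $M\in\{\PP^3\times\PP^1,\ \operatorname{Gr}(2,4)\}$ have Chow motive a finite direct sum of Tate twists, and such varieties carry an MCK decomposition trivially (the intersection product is strict for the grading, there being no positive-weight part of the Chow groups). By Manin's blow-up formula, $h(X)\cong h(M)\oplus h(S)(-1)$, so that the transcendental part of $h(X)$ is exactly that of the K3 surface $S$. By the result of Shen--Vial \cite{SV} (which rests on Beauville--Voisin), $S$ itself admits an MCK decomposition. It therefore suffices to prove that MCK is inherited by the blow-up $X=\operatorname{Bl}_S M$.

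\emph{Main step and obstacle.} The hard part will be that stability of MCK under blow-up is not formal: I would have to check that the Chow--K\"unneth projectors of $M$ and of $S$ are compatible with the inclusion $\iota\colon S\hookrightarrow M$, and that the Chern classes $c_j(N_{S/M})$ together with the image of $\iota^*\colon CH^*(M)\to CH^*(S)$ all lie in the distinguished subring $R^*(S)\subset CH^*(S)$ generated by divisor classes and $c_2(T_S)$ --- equivalently the piece $CH^*(S)_{(0)}$ on which the Beauville--Voisin zero-cycle relation holds. Granting these compatibilities, the blow-up criterion in the style of \cite{SV} manufactures an explicit self-dual MCK decomposition $\{\pi_i\}$ of $X$ out of those of $M$ and $S$, the genuinely new multiplicative relations $\pi_k\circ\delta_X\circ(\pi_i\otimes\pi_j)=0$ for $i+j\neq k$ all reducing to identities inside $R^*(S)$. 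The compatibility hypotheses do hold here: since $M$ has trivial Chow groups, every class restricts to a polynomial in the hyperplane (resp. Pl\"ucker) class, i.e. to an element of $NS(S)_\QQ=R^1(S)$; and because $S$ is a complete intersection, Beauville--Voisin guarantees precisely that products of such divisor classes, as well as the normal-bundle Chern classes (which are themselves restrictions of ambient classes), land in $R^*(S)$. Pinning down these containments explicitly, and verifying the multiplicativity relations on the resulting projectors, is where the real work lies.

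\emph{Consequences.} Once the MCK decomposition is in place, I would read off the expected structure of the Chow ring of $X$ from the Tate-plus-K3 shape of $h(X)$: the induced bigrading on $CH^*(X)$ should have a one-dimensional weight-zero part in top degree, and intersection products of divisors (and other ``distinguished'' classes) should lie in this part, yielding a Beauville-type ``splitting property'' for these fourfolds.
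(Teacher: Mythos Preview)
Your overall strategy matches the paper's: realise $X$ as a blow-up of $M\in\{\PP^3\times\PP^1,\ \operatorname{Gr}(2,4)\}$ along a K3 surface $S$, use the MCK decomposition of $S$ coming from Beauville--Voisin, and feed everything into the Shen--Vial blow-up criterion. The difference lies in how the hypotheses of that criterion are verified, and here your sketch has a concrete gap.

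You assert that ``every class restricts to a polynomial in the hyperplane (resp.\ Pl\"ucker) class''. For $M=\PP^3\times\PP^1$ this is essentially fine (there are two hyperplane classes, but $A^\ast(M)$ is indeed generated by divisors, so restrictions land in the subring of $A^\ast(S)$ generated by $\mathrm{NS}(S)_\QQ$, which Beauville--Voisin places in $A^\ast_{(0)}(S)$). For $M=\operatorname{Gr}(2,4)$ it is \emph{false}: $A^2(\operatorname{Gr}(2,4))$ is two-dimensional, and the Schubert class $\sigma_{1,1}$ (equivalently $c_2$ of the tautological bundle) is not a polynomial in the Pl\"ucker hyperplane class $h$. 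So you still owe the statement that $\sigma_{1,1}\vert_S\in\QQ\cdot\mathfrak o_S$; without it neither hypothesis~(2) nor hypothesis~(3) of the blow-up criterion is checked in the second family. This is exactly the point where the paper appeals to \cite[Proposition~2.1]{PSY}.

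Rather than handling the two ambient varieties separately, the paper verifies hypotheses (2) and (3) by a uniform ``Franchetta property'' argument: it works over the full parameter space, observes that the universal complete intersection $\bar{\Ss}\to M$ is a projective bundle, and shows that any universally defined class on the fibres $S_b$ which is fibrewise homologically trivial is in fact fibrewise zero in $A^2$. This sidesteps the case analysis and, as a bonus, yields a \emph{relative} MCK decomposition $\pi^\ast_\XX$ over the base, which the paper then uses for the applications (the Franchetta property for $\XX\to B$ and the multiplicative Deligne decomposition). Your direct Beauville--Voisin route, once patched with the $\sigma_{1,1}\vert_S$ input, is shorter for the bare theorem but does not give these extras. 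Finally, the paper adds a Step~2 spreading argument to pass from ``$S$ smooth'' to ``$X$ smooth''; your implicit claim that $X$ smooth forces $S$ smooth is in fact correct (a Jacobian computation on $y_0 f+y_1 g=0$), but you should say so rather than assert smoothness of $S$ without comment.
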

 
 Theorem \ref{main} has interesting consequences for the Chow ring $A^\ast(X)_{\QQ}$ of these fourfolds:
  
 \begin{nonumberingc}[=Corollary \ref{cor}] Let $X$ and $M$ be as in Theorem \ref{main}. 
 Let $R^3(X)\subset A^3(X)_{\QQ}$ be the subgroup generated by the Chern class $c_3(T_X)$, the image of the restriction map $A^3(M)_{\QQ}\to A^3(X)_{\QQ}$, and intersections 
 $A^1(X)_{\QQ}\cdot A^2(X)_{\QQ}$ of divisors with $2$-cycles.
 The cycle class map induces an injection
   \[ R^3(X)\ \hookrightarrow\ H^6(X,\QQ)\ .\]
  \end{nonumberingc}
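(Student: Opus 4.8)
The plan is to deduce this corollary from the multiplicative Chow–Künneth (MCK) decomposition furnished by Theorem~\ref{main}. The guiding principle, established in the work of Shen--Vial and subsequent papers on the ``splitting property'', is that an MCK decomposition induces a bigrading $A^i(X)_\QQ=\bigoplus_j A^i_{(j)}(X)_\QQ$ of the Chow ring which is compatible with the intersection product, and that the piece $A^i_{(0)}(X)_\QQ$ should inject into cohomology. Concretely, for a variety of K3 type one expects the ``interesting'' graded piece where cycles can be homologically trivial yet nonzero to be isolated, and the subring $R^\ast(X)$ generated by the natural geometric classes (Chern classes, restrictions from the ambient $M$, and products of divisors) to land in the degree-zero piece $A^\ast_{(0)}(X)_\QQ$.

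\smallskip

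First I would make explicit the bigrading on $A^3(X)_\QQ$ coming from the MCK decomposition, and identify which generators of $R^3(X)$ lie in $A^3_{(0)}(X)_\QQ$. The Chern class $c_3(T_X)$ is a distinguished cycle that an MCK decomposition is designed to place in the degree-zero part (this is part of what makes the decomposition ``multiplicative'' and is typically checked using the compatibility of the CK decomposition with the tangent bundle, or Vial's results on how Chern classes sit in the grading). Next, the image of the restriction $A^3(M)_\QQ\to A^3(X)_\QQ$ should be controlled: since $M$ has trivial Chow groups in the relevant sense (a product of projective spaces, or a Grassmannian times $\PP^1$, both of which have Chow groups generated by algebraic classes that inject into cohomology), restricted classes are themselves homologically controlled and land in $A^3_{(0)}$. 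Finally, for the intersection part $A^1(X)_\QQ\cdot A^2(X)_\QQ$, I would use multiplicativity: divisors lie in $A^1_{(0)}(X)_\QQ$ (degree-one Chow groups always do, as $A^1$ injects into $H^2$ for such Fano fourfolds with $H^1=0$), and I would need the relevant $2$-cycles entering these products to lie in $A^2_{(0)}(X)_\QQ$ as well, so that by the ring structure $A^i_{(0)}\cdot A^j_{(0)}\subset A^{i+j}_{(0)}$ the product lands in $A^3_{(0)}$.

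\smallskip

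Having shown $R^3(X)\subset A^3_{(0)}(X)_\QQ$, the final step is to prove that the cycle class map is injective on $A^3_{(0)}(X)_\QQ$, or at least on this subring. The strategy is to observe that a homologically trivial cycle in $A^3_{(0)}(X)_\QQ$ must vanish: the kernel of the cycle class map on codimension-three cycles is captured by the graded pieces $A^3_{(j)}$ with $j>0$, and the only nonzero such pieces are governed by the K3-type Hodge structure sitting in $H^4(X)$. For a fourfold the potentially nonvanishing ``non-classical'' part of $A^3$ would have to come from the transcendental motive of K3 type, but a dimension/degree count shows it cannot contribute to $A^3_{(0)}$ generated by the listed classes. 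I would make this precise by invoking the explicit description of the CK projectors and the fact that, on $X$, codimension-three cycles homologous to zero in the degree-zero part are forced to be rationally trivial.

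\smallskip

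The main obstacle I anticipate is the middle step: controlling the grading of the $2$-cycles that appear in the intersection product $A^1(X)_\QQ\cdot A^2(X)_\QQ$. Unlike divisors and Chern classes, an arbitrary element of $A^2(X)_\QQ$ need not lie in $A^2_{(0)}$, so one must argue that the specific $2$-cycles relevant here (or the products in question) do. I expect this to require either a direct analysis of $A^2(X)_\QQ$ showing it coincides with its degree-zero part in the range that matters, or a more delicate use of the multiplicativity of the decomposition together with the structure of the ambient variety $M$. Once the placement of all generators of $R^3(X)$ inside $A^3_{(0)}(X)_\QQ$ is secured, the injectivity into cohomology follows formally from the MCK formalism and the K3-type Hodge structure, so the real content is in pinning down this grading.
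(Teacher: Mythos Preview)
Your overall strategy matches the paper's: show $R^3(X)\subset A^3_{(0)}(X)$ and then prove that the cycle class map is injective on $A^3_{(0)}(X)$. However, you have misidentified where the difficulty lies.

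The step you flag as the ``main obstacle''---placing $A^2(X)$ inside $A^2_{(0)}(X)$---is in fact immediate. For these fourfolds one has $A^k_{hom}(X)=0$ for all $k\neq 3$: via the blow-up description $h(X)\cong h(S)(1)\oplus\bigoplus\mathds{1}(\ast)$ with $S$ a K3 surface, one gets $A^2(X)\cong A^1(S)\oplus\bigoplus\QQ$, which injects into cohomology. Since $\pi^j_X$ acts as zero on $H^{2k}(X)$ for $j\neq 2k$, this forces $A^k(X)=A^k_{(0)}(X)$ for $k\le 2$, and hence $A^1(X)\cdot A^2(X)\subset A^3_{(0)}(X)$ by multiplicativity with no further work.

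By contrast, the point you gloss over---that the image of $A^3(M)\to A^3(X)$ lands in $A^3_{(0)}(X)$---is where the paper actually expends effort. Your sentence ``since $M$ has trivial Chow groups\ldots restricted classes\ldots land in $A^3_{(0)}$'' is not a proof: triviality of $A^\ast(M)$ says nothing a priori about how restricted classes decompose under the bigrading on $X$. The paper handles this by observing that such classes are \emph{universally defined} (they extend over the whole family $\XX\to B$) and then invoking the Franchetta property established in the course of proving Theorem~\ref{main}: any universally defined class that is fibrewise homologically trivial vanishes fibrewise in Chow, so $(\pi^j_{X})_\ast$ of a restricted class is zero for $j\neq 6$.

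Finally, your injectivity step is too vague. The paper makes it precise again via the blow-up formula: the isomorphism $A^3(X)\cong A^2(S)\oplus\bigoplus\QQ$ is compatible with the bigrading (the correspondence realizing it is of pure grade $0$), reducing the question to the injectivity of $A^2_{(0)}(S)\hookrightarrow H^4(S)$ for a K3 surface, which is the Beauville--Voisin result.
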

 
  This is reminiscent of the famous result of Beauville--Voisin describing the Chow ring of a $K3$ surface \cite{BV}.
  More generally, there is a similar injectivity result for the Chow ring of certain self-products $X^m$ (Corollary \ref{cor}).
  
  Another consequence is the existence of a multiplicative decomposition in the derived category for families of Fano fourfolds as in Theorem \ref{main} (Corollary \ref{deldec}).
  
%
%
  \vskip0.6cm

\begin{convention} In this note, the word {\sl variety\/} will refer to a reduced irreducible scheme of finite type over $\C$. For a smooth variety $X$, we will denote by $A^j(X)$ the Chow group of codimension $j$ cycles on $X$ 
with $\QQ$-coefficients.

The notation 
$A^j_{hom}(X)$ will be used to indicate the subgroups of 
homologically trivial cycles.

For a morphism between smooth varieties $f\colon X\to Y$, we will write $\Gamma_f\in A^\ast(X\times Y)$ for the graph of $f$.
The contravariant category of Chow motives (i.e., pure motives with respect to rational equivalence as in \cite{Sc}, \cite{MNP}) will be denoted $\MM_{\rm rat}$. 

We will write $H^\ast(X):=H^\ast(X,\QQ)$ for singular cohomology with $\QQ$-coefficients.
\end{convention}
  
 \vskip0.6cm

  \section{The Fano fourfolds}
  
  \begin{proposition}\label{4folds}
  
  (\rom1) Let $X\subset\PP^3\times\PP^1\times\PP^1$ be a smooth hypersurface of multidegree $(2,1,1)$ (following \cite{FM}, we will say $X$ is ``of type B1''). Then $X$ is Fano, and the Hodge numbers of $X$ are
      \[ \begin{array}[c]{ccccccccccccc}
      &&&&&& 1 &&&&&&\\
      &&&&&0&&0&&&&&\\
      &&&&0&&3&&0&&&&\\
        &&&0&&0&&0&&0&&&\\
      &&0&&1&&22&&1&&0&&\\
      &&&0&&0&&0&&0&&&\\
       &&&&0&&3&&0&&&&\\
        &&&&&0&&0&&&&&\\      
        &&&&&& 1 &&&&&&\\
        \end{array}\]
        
     (\rom2) Let $X\subset\operatorname{Gr}(2,4)\times\PP^1$ be a smooth hypersurface of multidegree $(2,1)$ with respect to the Pl\"ucker embedding (following \cite{FM}, we will say $X$ is ``of type B2''). Then $X$ is Fano, and the Hodge numbers of $X$ are
      \[ \begin{array}[c]{ccccccccccccc}
      &&&&&& 1 &&&&&&\\
      &&&&&0&&0&&&&&\\
      &&&&0&&2&&0&&&&\\
        &&&0&&0&&0&&0&&&\\
      &&0&&1&&22&&1&&0&&\\
      &&&0&&0&&0&&0&&&\\
       &&&&0&&2&&0&&&&\\
        &&&&&0&&0&&&&&\\      
        &&&&&& 1 &&&&&&\\
        \end{array}\]
  \end{proposition}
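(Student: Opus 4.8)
The plan is to separate the two assertions: the Fano property, which is immediate from adjunction, and the Hodge numbers, which I would obtain by combining the Lefschetz hyperplane theorem with an explicit description of the variable part of $H^4(X)$ in terms of an auxiliary K3 surface. In both cases $X$ is a smooth ample divisor in the $5$-fold $M$, so adjunction gives $K_X=(K_M+X)|_X$. For type B1 one has $K_M=\OO_M(-4,-2,-2)$ and $X\in|\OO_M(2,1,1)|$, so $K_X=\OO_M(-2,-1,-1)|_X$; for type B2 one has $K_M=\OO_M(-4,-2)$ (since $\operatorname{Gr}(2,4)$ has Fano index $4$) and $X\in|\OO_M(2,1)|$, so $K_X=\OO_M(-2,-1)|_X$. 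In each case $-K_X$ is the restriction of an ample bundle, whence $X$ is Fano; in particular $h^{4,0}(X)=h^0(X,K_X)=0$.

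Next I would invoke the Lefschetz hyperplane theorem: since $\dim X=4$, the restriction $H^k(X)\cong H^k(M)$ is an isomorphism for $k\le 3$ and embeds $H^4(M)$ as a sub-Hodge structure of $H^4(X)$. As $H^\ast(M)$ is of Tate type and concentrated in even degrees (computed by Künneth for B1, by Schubert calculus on $\operatorname{Gr}(2,4)$ for B2), this forces $H^{\mathrm{odd}}(X)=0$ and fixes every Hodge number off the middle row, giving $h^{1,1}(X)=h^{1,1}(M)=3$ (B1) resp.\ $2$ (B2) and an ambient summand of $H^{2,2}(X)$ of dimension $h^{2,2}(M)=4$ (B1) resp.\ $3$ (B2). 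The total size of the middle row then comes from the topological Euler characteristic: because the odd Betti numbers vanish, $\chi(X)=2+2b_2(X)+b_4(X)$, while $\chi(X)=\int_X c_4(T_X)$ is a routine Chern-number computation via $c(T_X)=c(T_M)|_X/c(N_{X/M})$ with $N_{X/M}=\OO_X(X)$; this yields $b_4(X)=24$.

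To split the $24$-dimensional middle cohomology into Hodge pieces, I would bring in a K3 surface governing the variable part. For type B2, projection to the $\PP^1$-factor realises $X$ as the closure of the graph of the rational map defined by the two quadrics $G_0,G_1$ cutting out $X$; resolving the base locus identifies $X$ with the blow-up of $\operatorname{Gr}(2,4)$ along $Z=\{G_0=G_1=0\}\subset\operatorname{Gr}(2,4)$, which is a smooth complete intersection of three quadrics in $\PP^5$ (the Plücker quadric together with $G_0,G_1$) and hence a K3 surface. The blow-up formula then gives $H^4(X)\cong H^4(\operatorname{Gr}(2,4))\oplus H^2(Z)(-1)$, whence $h^{3,1}(X)=h^{2,0}(Z)=1$ and $h^{2,2}(X)=2+h^{1,1}(Z)=22$. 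For type B1, projection to $\PP^1\times\PP^1$ realises $X$ as a quadric surface bundle whose discriminant is a curve $D\in|\OO(4,4)|$; since $\OO(4,4)=-2K_{\PP^1\times\PP^1}$, the double cover $\widetilde Y\to\PP^1\times\PP^1$ branched along $D$ has trivial canonical bundle and is a K3 surface, and the theory of quadric surface bundles identifies the transcendental part of $H^4(X)$ with a Tate twist of that of $H^2(\widetilde Y)$, giving $h^{3,1}(X)=1$ and, together with $b_4(X)=24$, $h^{2,2}(X)=22$.

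The main obstacle will be the fine analysis of this variable middle cohomology. For type B2 the only delicate point is to verify that the projection genuinely resolves into the blow-up of a \emph{smooth} K3 surface $Z$ (that is, genericity of the defining quadrics), after which the Hodge numbers drop out of the blow-up formula. For type B1 the hard part is to show that, apart from the single copy of the K3-type Hodge structure carried by $\widetilde Y$, the degenerate fibres contribute only Tate classes, so that indeed $h^{3,1}=1$ and not more; this requires a vanishing-cycle/Leray argument or the even-Clifford-algebra description of quadric surface bundles, and is the step demanding genuine work. (These Hodge numbers are also recorded in \cite{FM}.)
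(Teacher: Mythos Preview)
Your treatment of the Fano property and of type B2 matches the paper's exactly: projecting to the $\PP^1$ factor exhibits $X$ as the blow-up of $\operatorname{Gr}(2,4)$ along the base locus of the pencil, a smooth K3 surface of genus~5, and the Hodge diamond drops out of the blow-up formula. (Smoothness of the center holds for \emph{general} $X$; since Hodge numbers are deformation-invariant in a smooth family, this suffices for all smooth $X$, which also disposes of the genericity worry you raise.)

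For type B1, however, the paper takes a much simpler route than your quadric-bundle analysis. Rather than projecting to $\PP^1\times\PP^1$, it projects to a \emph{single} $\PP^1$ factor, writing $M=(\PP^3\times\PP^1)\times\PP^1$ and applying the \emph{same} blow-up lemma as for B2: a general $(d,1)$-hypersurface in $Z\times\PP^1$ is the blow-up of $Z$ along the base locus of the associated pencil. Here $Z=\PP^3\times\PP^1$, the pencil consists of two $(2,1)$-divisors, and their intersection $S$ is a smooth K3 surface of genus~7. The blow-up formula then gives all Hodge numbers at once (e.g.\ $h^{1,1}(X)=h^{1,1}(Z)+h^{0,0}(S)=2+1=3$ and $h^{2,2}(X)=h^{2,2}(Z)+h^{1,1}(S)=2+20=22$), with no need for the Euler-characteristic computation, the discriminant double cover, or the Leray/Clifford step you correctly identify as the hard part. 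Your approach would eventually succeed, but it replaces a one-line argument by a genuinely nontrivial piece of quadric-bundle theory; the paper's uniform treatment is both shorter and perfectly parallel between B1 and B2.
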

  
  \begin{proof} An easy way to determine the Hodge numbers is to use the following identification:
  
  \begin{lemma}\label{blow} Let $Z$ be a smooth projective variety of Picard number 1, and $X\subset Z\times\PP^1$ a general hypersurface of bidegree $(d,1)$. Then
     $X$ is isomorphic to the blow-up of $Z$ with center $S$, where $S\subset Z$ is a smooth dimensionally transversal intersection of $2$ divisors of degree $d$. 
     
 Conversely, given a smooth dimensionally transversal intersection $S\subset Z$ of $2$ divisors of degree $d$, the blow-up of $Z$ with center $S$ is isomorphic to a smooth hypersurface $X\subset Z\times\PP^1$ of bidegree $(d,1)$.
     \end{lemma}
     
     \begin{proof} This is \cite[Lemma 2.2]{FM}. The gist of the argument is that $X$ determines a pencil of divisors in $Z$, of which $S$ is the base locus.
     In terms of equations, if $X$ is defined by $y_0 f + y_1 g=0$ (where $[y_0:y_1]\in\PP^1$ and $f, g\in H^0(Z,\OO_Z(d))$) then $S$ is defined by $f=g=0$.
     It follows that for $X$ general (in the usual sense of ``being parametrized by a Zariski open in the parameter space'') the locus $S$ is smooth.
     \end{proof}
     
     In case (\rom1), $Z=\PP^3\times\PP^1$ and $S$ is a genus 7 K3 surface. In case (\rom2), $Z=\operatorname{Gr}(2,4)$ (which is a quadric in $\PP^5$), and $S$ is a genus 5 K3 surface. This readily gives the Hodge numbers. 
  \end{proof}

  \begin{remark} Fatighenti--Mongardi \cite{FM} give a long list of Fano varieties of K3 type. 
  The Fano fourfolds of Proposition \ref{4folds}(\rom1) and (\rom2) are labelled B1 resp. B2 in their list.
    \end{remark}
  
  \section{Multiplicative Chow--K\"unneth decomposition}

	\begin{definition}[Murre \cite{Mur}]\label{ck} Let $X$ be a smooth projective
		variety of dimension $n$. We say that $X$ has a 
		{\em CK  decomposition\/} if there exists a decomposition of the
		diagonal
		\[ \Delta_X= \pi^0_X+ \pi^1_X+\cdots +\pi^{2n}_X\ \ \ \hbox{in}\
		A^n(X\times X)\ ,\]
		such that the $\pi^i_X$ are mutually orthogonal idempotents and the action of
		$\pi^i_X$ on $H^j(X)$ is the identity for $i=j$ and zero for $i\not=j$.
		Given a CK decomposition for $X$, we set 
		$$A^i(X)_{(j)} := (\pi_X^{2i-j})_\ast A^i(X).$$
		The CK decomposition is said to be {\em self-dual\/} if
		\[ \pi^i_X = {}^t \pi^{2n-i}_X\ \ \ \hbox{in}\ A^n(X\times X)\ \ \ \forall
		i\ .\]
		(Here ${}^t \pi$ denotes the transpose of a cycle $\pi$.)
		
		  (NB: ``CK decomposition'' is short-hand for ``Chow--K\"unneth
		decomposition''.)
	\end{definition}
	
	\begin{remark} \label{R:Murre} The existence of a Chow--K\"unneth decomposition
		for any smooth projective variety is part of Murre's conjectures \cite{Mur},
		\cite{MNP}. 
		It is expected that for any $X$ with a CK
		decomposition, one has
		\begin{equation*}\label{hope} A^i(X)_{(j)}\stackrel{??}{=}0\ \ \ \hbox{for}\
		j<0\ ,\ \ \ A^i(X)_{(0)}\cap A^i_{num}(X)\stackrel{??}{=}0.
		\end{equation*}
		These are Murre's conjectures B and D, respectively.
	\end{remark}

	\begin{definition}[Definition 8.1 in \cite{SV}]\label{mck} Let $X$ be a
		smooth
		projective variety of dimension $n$. Let $\Delta_X^{sm}\in A^{2n}(X\times
		X\times X)$ be the class of the small diagonal
		\[ \Delta_X^{sm}:=\bigl\{ (x,x,x) : x\in X\bigr\}\ \subset\ X\times
		X\times X\ .\]
		A CK decomposition $\{\pi^i_X\}$ of $X$ is {\em multiplicative\/}
		if it satisfies
		\[ \pi^k_X\circ \Delta_X^{sm}\circ (\pi^i_X\otimes \pi^j_X)=0\ \ \ \hbox{in}\
		A^{2n}(X\times X\times X)\ \ \ \hbox{for\ all\ }i+j\not=k\ .\]
		In that case,
		\[ A^i(X)_{(j)}:= (\pi_X^{2i-j})_\ast A^i(X)\]
		defines a bigraded ring structure on the Chow ring\,; that is, the
		intersection product has the property that 
		\[  \ima \Bigl(A^i(X)_{(j)}\otimes A^{i^\prime}(X)_{(j^\prime)}
		\xrightarrow{\cdot} A^{i+i^\prime}(X)\Bigr)\ \subseteq\ 
		A^{i+i^\prime}(X)_{(j+j^\prime)}\ .\]
		
		(For brevity, we will write {\em MCK decomposition\/} for ``multiplicative Chow--K\"unneth decomposition''.)
	\end{definition}
	
	\begin{remark}
	The property of having an MCK decomposition is
	severely restrictive, and is closely related to Beauville's ``(weak) splitting
	property'' \cite{Beau3}. For more ample discussion, and examples of varieties
	admitting a MCK decomposition, we refer to
	\cite[Chapter 8]{SV}, as well as \cite{V6}, \cite{SV2},
	\cite{FTV}, \cite{LV}.
	\end{remark}

	\begin{remark}\label{self} It turns out that any MCK decomposition is self-dual, cf. \cite[Footnote 24]{FV}.
	\end{remark}

There are the following useful general results:	

\begin{proposition}[Shen--Vial \cite{SV}]\label{product} Let $M,N$ be smooth projective varieties that have an MCK decomposition. Then the product $M\times N$ has an MCK decomposition.
\end{proposition}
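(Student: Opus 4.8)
The plan is to take the ``product'' Chow--K\"unneth decomposition and verify that it is multiplicative by reducing everything to the multiplicativity already assumed for the two factors. Write $m=\dim M$, $n=\dim N$, and let $\{\pi^p_M\}$, $\{\pi^q_N\}$ be the given MCK decompositions. Denote by $\pi^p_M\otimes\pi^q_N\in A^{m+n}\bigl((M\times N)\times(M\times N)\bigr)$ the exterior product of the two correspondences, reordered via the obvious permutation isomorphism $(M\times M)\times(N\times N)\xrightarrow{\sim}(M\times N)\times(M\times N)$, and set
\[ \pi^k_{M\times N}:=\sum_{p+q=k}\pi^p_M\otimes\pi^q_N\ . \]
First I would check that $\{\pi^k_{M\times N}\}$ is a CK decomposition. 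Summing over all $k$ gives $(\sum_p\pi^p_M)\otimes(\sum_q\pi^q_N)=\Delta_M\otimes\Delta_N=\Delta_{M\times N}$; mutual orthogonality and idempotence follow from the interchange law recalled below together with the corresponding properties on $M$ and $N$; and the K\"unneth property holds because on $H^\ast(M\times N)=\bigoplus_{p,q}H^p(M)\otimes H^q(N)$ the projector $\pi^p_M\otimes\pi^q_N$ acts as the identity on the summand $H^p(M)\otimes H^q(N)$ and as zero elsewhere. This part is entirely standard.

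The crux is multiplicativity, i.e. $\pi^k_{M\times N}\circ\Delta^{sm}_{M\times N}\circ(\pi^i_{M\times N}\otimes\pi^j_{M\times N})=0$ whenever $i+j\neq k$. The two inputs I would isolate are: (a) the interchange law for correspondences, namely $(\alpha_1\otimes\alpha_2)\circ(\beta_1\otimes\beta_2)=(\alpha_1\circ\beta_1)\otimes(\alpha_2\circ\beta_2)$ (as always, after reordering factors), which is a formal consequence of the definition of composition of correspondences and the projection formula; and (b) the fact that, under the reordering isomorphism $(M\times N)^3\xrightarrow{\sim}M^3\times N^3$, the small diagonal splits as a product $\Delta^{sm}_{M\times N}=\Delta^{sm}_M\otimes\Delta^{sm}_N$. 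Granting (a) and (b), the triple composition defining multiplicativity factors through the two factors separately.

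I would then expand. Writing $\pi^i_{M\times N}=\sum_{a_1+b_1=i}\pi^{a_1}_M\otimes\pi^{b_1}_N$, and similarly $\pi^j_{M\times N}=\sum_{a_2+b_2=j}\pi^{a_2}_M\otimes\pi^{b_2}_N$ and $\pi^k_{M\times N}=\sum_{c+d=k}\pi^c_M\otimes\pi^d_N$, repeated use of (a) and (b) turns the triple composition into a sum of terms
\[ \Bigl(\pi^{c}_M\circ\Delta^{sm}_M\circ(\pi^{a_1}_M\otimes\pi^{a_2}_M)\Bigr)\otimes\Bigl(\pi^{d}_N\circ\Delta^{sm}_N\circ(\pi^{b_1}_N\otimes\pi^{b_2}_N)\Bigr)\ . \]
By the MCK property of $M$ the first factor vanishes unless $a_1+a_2=c$, and by that of $N$ the second vanishes unless $b_1+b_2=d$. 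If both survive, then $i+j=(a_1+b_1)+(a_2+b_2)=(a_1+a_2)+(b_1+b_2)=c+d=k$; hence when $i+j\neq k$ every term is zero, proving multiplicativity.

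The only real obstacle is careful bookkeeping: one must fix once and for all the permutation isomorphisms identifying the various products of copies of $M$ and $N$, and verify that the interchange law (a) and the splitting (b) hold with respect to these identifications. Since we work with $\QQ$-coefficients and all operations are pullbacks, pushforwards, and intersection products along these permutations, no signs or orientation subtleties intervene, and the verification is a routine---if slightly tedious---diagram chase.
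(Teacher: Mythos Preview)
Your proposal is correct and follows exactly the route indicated in the paper: you take the product CK decomposition $\pi^k_{M\times N}=\sum_{p+q=k}\pi^p_M\otimes\pi^q_N$ and verify multiplicativity by splitting $\Delta^{sm}_{M\times N}=\Delta^{sm}_M\otimes\Delta^{sm}_N$ and applying the interchange law. The only difference is one of presentation: the paper does not spell out the argument but simply cites \cite[Theorem~8.6]{SV}, whereas you have reconstructed the proof that appears there.
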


\begin{proof} This is \cite[Theorem 8.6]{SV}, which shows more precisely that the {\em product CK decomposition\/}
  \[ \pi^i_{M\times N}:= \sum_{k+\ell=i} \pi^k_M\times \pi^\ell_N\ \ \ \in A^{\dim M+\dim N}\bigl((M\times N)\times (M\times N)\bigr) \]
  is multiplicative.
\end{proof}

\begin{proposition}[Shen--Vial \cite{SV2}]\label{blowup} Let $M$ be a smooth projective variety, and let $f\colon\wt{M}\to M$ be the blow-up with center a smooth closed subvariety
$N\subset M$. Assume that
\begin{enumerate}

\item $M$ and $N$ have an MCK decomposition;

\item the Chern classes of the normal bundle $\NNN_{N/M}$ are in $A^\ast_{(0)}(N)$;

\item the graph of the inclusion morphism $N\to M$ is in $A^\ast_{(0)}(N\times M)$;

\item the Chern classes $c_j(T_M)$ are in $A^\ast_{(0)}(M)$.

\end{enumerate}
Then $\wt{M}$ has an MCK decomposition, the Chern classes $c_j(T_{\wt{M}})$ are in $A^\ast_{(0)}(\wt{M})$, and the graph $\Gamma_f$ is in $A^\ast_{(0)}(  \wt{M}\times M)$.
\end{proposition}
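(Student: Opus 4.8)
The engine of the proof is the classical Manin blow-up formula for Chow motives, combined with a careful analysis of how the intersection product on the blow-up interacts with the induced bigrading. Write $c=\codim_M N$, let $j\colon E\to\wt{M}$ be the inclusion of the exceptional divisor, let $g\colon E=\PP(\NNN_{N/M})\to N$ be the projective bundle projection, and set $\xi:=c_1(\OO_E(1))\in A^1(E)$. Manin's formula provides an isomorphism of motives
\[
\hh(\wt{M})\ \cong\ \hh(M)\oplus\bigoplus_{r=1}^{c-1}\hh(N)(-r)\ ,
\]
realised by explicit correspondences assembled from $\Gamma_f$, $\Gamma_j$, $\Gamma_g$ and the powers of $\xi$. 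My first step is to transport the MCK projectors of $M$ and $N$ through this isomorphism: the summand $\hh(M)$ contributes the idempotent ${}^t\Gamma_f\circ\pi^i_M\circ\Gamma_f$ (idempotent because $f_\ast f^\ast=\ide$), while each Tate-twisted summand $\hh(N)(-r)$ contributes a term obtained by conjugating $\pi^{i-2r}_N$ by a correspondence of the shape $\Gamma_j\circ({}^t\Gamma_g\cdot\xi^{\,c-1-r})$. By hypothesis (2) the projective bundle $E$ inherits an MCK decomposition with $\xi\in A^1_{(0)}(E)$ (via the projective-bundle criterion for MCK), so these correspondences are compatible with the gradings. One checks routinely that the resulting $\{\pi^i_{\wt{M}}\}$ are mutually orthogonal idempotents acting correctly on $H^\ast(\wt{M})$, i.e. a CK decomposition.

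The heart of the matter is to prove that $\{\pi^i_{\wt{M}}\}$ is multiplicative, i.e. $\pi^k_{\wt{M}}\circ\Delta_{\wt{M}}^{sm}\circ(\pi^i_{\wt{M}}\otimes\pi^j_{\wt{M}})=0$ whenever $i+j\neq k$; equivalently, that the intersection product respects the bigrading $A^\ast_{(\ast)}(\wt{M})$. The Chow ring of the blow-up is generated by $f^\ast A^\ast(M)$ and by the classes $j_\ast\bigl(\xi^{\,s}\cdot g^\ast A^\ast(N)\bigr)$, and its multiplication is governed by three rules: the product inside $f^\ast A^\ast(M)$, which copies that of $M$; the projection formula $f^\ast\alpha\cdot j_\ast\beta=j_\ast\bigl(g^\ast(\alpha|_N)\cdot\beta\bigr)$; and the self-intersection formula $j_\ast\beta\cdot j_\ast\beta'=-\,j_\ast(\xi\cdot\beta\cdot\beta')$, taken together with the Grothendieck relation among $\xi$ and the $g^\ast c_s(\NNN_{N/M})$ that presents $A^\ast(E)$ as an $A^\ast(N)$-algebra. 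I would check that each rule shifts the bigrading additively: the first rule is exactly the multiplicativity of the MCK of $M$; the second uses hypothesis (3), which puts the restriction $\alpha\mapsto\alpha|_N$ (i.e. the graph of $N\hookrightarrow M$) in degree $0$; and the third uses hypothesis (2), so that the normal-bundle Chern classes entering through $\xi$ and through the defining relation lie in $A^\ast_{(0)}(N)$.

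The step I expect to be most delicate is the self-intersection rule, since it is precisely there that the Chern classes of $\NNN_{N/M}$ are injected into the product through the excess-intersection term; tracking the grading across this term is what forces hypothesis (2), without which graded pieces would get mixed. Concretely, I would express $\Delta_{\wt{M}}^{sm}$ as a sum of push-forwards of $\Delta_M^{sm}$, $\Delta_N^{sm}$ and mixed terms along $f$, $j$ and $g$, and then reduce the vanishing $\pi^k_{\wt{M}}\circ\Delta_{\wt{M}}^{sm}\circ(\pi^i_{\wt{M}}\otimes\pi^j_{\wt{M}})=0$ to the multiplicativity already known for $M$ and $N$. This reduction goes through exactly because all the structure constants appearing — the classes $c_s(\NNN_{N/M})$, the class of $N$ in $M$, and $c_j(T_M)$ — sit in $A^\ast_{(0)}$; this is where all four hypotheses are consumed.

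It remains to verify the two auxiliary assertions. For the Chern classes, the blow-up formula for the tangent bundle expresses $c(T_{\wt{M}})$ as $f^\ast c(T_M)$ times a correction class supported on $E$ and built from $\xi$ and $g^\ast c(\NNN_{N/M})$; since $f^\ast$, $g^\ast$ and $j_\ast$ preserve the bigrading and hypotheses (2) and (4) place $c(\NNN_{N/M})$ and $c(T_M)$ in degree $0$, it follows that $c_j(T_{\wt{M}})\in A^\ast_{(0)}(\wt{M})$. Finally, the construction of the projectors yields the intertwining relations $\Gamma_f\circ\pi^i_{\wt{M}}=\pi^i_M\circ\Gamma_f$ for all $i$, which is precisely the statement that $\Gamma_f$ lies in $A^{\dim M}_{(0)}(\wt{M}\times M)$ for the product CK decomposition, completing the argument.
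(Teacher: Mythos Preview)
Your proposal is correct and follows essentially the same approach as the paper, which simply cites \cite[Proposition 2.4]{SV2} and records the construction formula $\pi^j_{\wt{M}}:= \Psi\circ\bigl( \pi^j_M\oplus \bigoplus_{k=1}^r \pi^{j-2k}_N\bigr)\circ \Psi^{-1}$ without further detail. What you have written is precisely an unpacking of that citation: your explicit correspondences built from $\Gamma_f$, $\Gamma_j$, $\Gamma_g$ and powers of $\xi$ are the $\Psi,\Psi^{-1}$ of the paper's sketch, and your verification of multiplicativity via the three product rules on the blow-up (and the placement of the auxiliary classes in $A^\ast_{(0)}$) is exactly the check ``that the $\pi^\ast_{\wt{M}}$ form an MCK decomposition'' that the paper leaves to \cite{SV2}.
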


\begin{proof} This is \cite[Proposition 2.4]{SV2}. (NB: in loc. cit., $M$ and $N$ are required to have a {\em self-dual\/} MCK decomposition; however, the self-duality is actually a redundant hypothesis, cf. Remark \ref{self}.)

In a nutshell, the construction of loc. cit. is as follows. Given MCK decompositions $\pi^\ast_M$ and $\pi^\ast_N$ (of $M$ resp. $N$), one defines
  \begin{equation}\label{pimck}
    \pi^j_{\wt{M}}:= \Psi\circ\Bigl( \pi^j_M\oplus \bigoplus_{k=1}^r \pi^{j-2k}_N\Bigr)\circ \Psi^{-1}\  \ \in\ A^{\dim \wt{M}}(\wt{M}\times \wt{M})\ ,\end{equation}
    where $r+1$ is the codimension of $N$ in $M$, and $\Psi, \Psi^{-1}$ are certain explicit correspondences (this is \cite[Equation (13)]{SV2}). Then one checks that the $\pi^\ast_{\wt{M}}$ form an MCK decomposition.
\end{proof}

 \section{Main result}
 
 \begin{theorem}\label{main} Let $X$ be a smooth fourfold of one of the following types:
  \begin{itemize}
  \item a hypersurface of multidegree $(2,1,1)$ in $\PP^3\times\PP^1\times\PP^1$;
  
  \item a hypersurface of multidegree $(2,1)$ in $\operatorname{Gr}(2,4)\times\PP^1$ (with respect to the Pl\"ucker embedding).
  \end{itemize}
 
 Then $X$ has an MCK decomposition. Moreover, the Chern classes $c_j(T_X)$ are in 
 $A^\ast_{(0)}(X)$.
 \end{theorem}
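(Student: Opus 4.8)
The plan is to realize each $X$ as a blow-up and then apply Proposition \ref{blowup}. By Lemma \ref{blow}, a fourfold $X$ of type B1 (resp. B2) is isomorphic to the blow-up $\wt{M}$ of $M=\PP^3\times\PP^1$ (resp. $M=\operatorname{Gr}(2,4)$) along a smooth surface $N=S$, namely the base locus $\{f=g=0\}$ of the pencil of divisors defining $X$; concretely $S$ is the complete intersection of two divisors in $M$, of bidegree $(2,1)$ on $\PP^3\times\PP^1$, resp. of Pl\"ucker degree $2$ on $\operatorname{Gr}(2,4)$, and in both cases $S$ is a K3 surface (of genus $7$, resp. $5$). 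I would fix these identifications and verify hypotheses (1)--(4) of Proposition \ref{blowup} for the pair $(M,S)$; the conclusion of that proposition then yields simultaneously the MCK decomposition of $\wt{M}=X$ and the statement $c_j(T_X)\in A^\ast_{(0)}(X)$, so the ``Moreover'' needs no separate argument.

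Three of the four hypotheses are straightforward. The ambient $M$ has trivial Chow groups (it is a product of projective spaces, resp. a Grassmannian), so it carries the canonical MCK decomposition with $A^\ast(M)=A^\ast_{(0)}(M)$; in particular hypothesis (4), $c_j(T_M)\in A^\ast_{(0)}(M)$, is automatic, as is the MCK half of hypothesis (1) for $M$. The other half of (1), that $S$ has an MCK decomposition, is the Beauville--Voisin structure on a K3 surface, for which $A^2_{(0)}(S)=\QQ\,o_S$ is spanned by the distinguished class $o_S$ of \cite{BV} while $A^2_{(2)}(S)=A^2_{hom}(S)$. For hypothesis (2), since $S$ is a complete intersection the bundle $\NNN_{S/M}$ splits as a direct sum of two line bundles; hence $c_1(\NNN_{S/M})$ is a divisor class, lying in $A^1_{(0)}(S)=A^1(S)$, while $c_2(\NNN_{S/M})$ is a product of two divisor classes, which lies in $\QQ\,o_S=A^2_{(0)}(S)$ by Beauville--Voisin.

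The crux is hypothesis (3), that the graph $\Gamma_\iota$ of $\iota\colon S\hookrightarrow M$ lies in $A^\ast_{(0)}(S\times M)$. Using the product MCK decomposition on $S\times M$ together with $A^\ast(M)=A^\ast_{(0)}(M)$, I would first reduce this to the single requirement that the restriction map $\iota^\ast\colon A^2(M)\to A^2(S)$ has image in $\QQ\,o_S$: the K\"unneth components of $\Gamma_\iota$ are precisely the classes $\iota^\ast(m^\vee_\alpha)$ for a basis $\{m_\alpha\}$ of $A^\ast(M)$, and the only nontrivial constraint occurs in codimension $2$ on $S$ (codimension $\ge 3$ classes restrict to $0$ on the surface). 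For type B1 this is immediate, because $A^2(\PP^3\times\PP^1)$ is spanned by products of divisor classes, so $\iota^\ast$ of each generator is a product of divisors on $S$ and hence lies in $\QQ\,o_S$ again by Beauville--Voisin.

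For type B2 this is the main obstacle. Here $A^2(\operatorname{Gr}(2,4))$ is spanned by the Schubert classes $\sigma_2$ and $\sigma_{1,1}$, of which only the combination $\sigma_1^2=\sigma_2+\sigma_{1,1}$ is a product of divisors; thus Beauville--Voisin gives $\iota^\ast(\sigma_2)+\iota^\ast(\sigma_{1,1})\in\QQ\,o_S$, but one still needs $\iota^\ast(\sigma_2)\in\QQ\,o_S$ individually. I would establish this by a spreading-out argument over the family $\pi\colon\Ss\to B$ of these K3 surfaces. Both $\iota^\ast(\sigma_2)$ and $o_S$ are generically defined over $B$ --- the former as the restriction of the fixed, monodromy-invariant ambient class $\sigma_2$, the latter as the restriction of $\tfrac{1}{24}c_2(T_{\Ss/B})$ --- and the $0$-cycle $\iota^\ast(\sigma_2)-4\,o_S$ has degree $0$, hence is fiberwise homologically trivial. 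The genus-$5$ K3 surfaces here have maximal variation of Hodge structure, and the expected conclusion is that this family satisfies the Franchetta property: every generically defined, fiberwise homologically trivial $0$-cycle is fiberwise rationally trivial. Proving this property --- via the usual infinitesimal-invariant and Noether--Lefschetz density argument of Voisin (cf. \cite{V6}) --- and then applying it to $\iota^\ast(\sigma_2)-4\,o_S$ would give $\iota^\ast(\sigma_2)=4\,o_S\in\QQ\,o_S$, completing hypothesis (3) and hence the proof. I expect this verification of the Franchetta property for the B2 family to be the technical heart of the argument.
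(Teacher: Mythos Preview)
Your overall strategy coincides with the paper's: realize $X$ as a blow-up via Lemma~\ref{blow} and apply Proposition~\ref{blowup}. Your verification of hypotheses (1), (2), (4) is actually cleaner than the paper's. In particular, for (2) you observe directly that $\NNN_{S/M}$ is a sum of two line bundles, so $c_2(\NNN_{S/M})$ is an intersection of divisors and hence a multiple of $\mathfrak{o}_S$ by Beauville--Voisin; the paper instead deduces (2) from a family-wise Franchetta statement (Claim~\ref{gfc}). Likewise, your reduction of (3) to the statement that $\iota^\ast A^2(M)\subset\QQ\,\mathfrak{o}_S$ is correct, and for type B1 this is immediate since $A^2(\PP^3\times\PP^1)$ is spanned by products of divisors.

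There is, however, a genuine gap. Lemma~\ref{blow} as stated only applies to a \emph{general} $X$, i.e.\ to $X$ for which the base locus $S$ is smooth; it does not assert that every smooth $X$ arises this way. The paper handles this by a two-step argument: Step~1 treats the general $X$ exactly as you do, but then Step~2 spreads the relative MCK decomposition from the open $B_0$ (where $S$ is smooth) to the larger open $B$ (where $X$ is smooth) using Voisin's spread lemma \cite[Lemma~3.2]{Vo} applied to $\XX\times_B\XX$ and $\XX\times_B\XX\times_B\XX$. Your write-up simply asserts the blow-up description for all smooth $X$, which is not what Lemma~\ref{blow} gives you; you must either add this spreading step, or supply a direct argument that smoothness of $X$ forces smoothness of $S$.

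A second, smaller point concerns the B2 case of hypothesis~(3). You correctly identify that one needs $\iota^\ast(\sigma_2)\in\QQ\,\mathfrak{o}_S$ and propose to get it from a Franchetta property for the family, but the method you sketch (``infinitesimal-invariant and Noether--Lefschetz density, cf.\ \cite{V6}'') is vague and the citation is off (\cite{V6} is Vial's hyperk\"ahler paper, not a Franchetta argument). The paper proves the needed Franchetta property (Claim~\ref{gfc}) by a concrete projective-bundle trick: the total space $\bar{\Ss}\to M$ is a projective bundle, so any $\Gamma\in A^2(\bar{\Ss})$ restricts fibrewise to a class coming from $A^2(M)$; one then checks that $A^2(M)\to A^2(S_b)$ lands in $\QQ\,\mathfrak{o}_{S_b}$, invoking \cite[Proposition~2.1]{PSY} for the Grassmannian case. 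This is the argument you should use (or cite) here rather than an unspecified spreading/NL argument.
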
 
 
 \begin{proof} The argument relies on the alternative description of the general $X$ given by Lemma \ref{blow}.
 
 
 
 \medskip\noindent
 {\it Step 1:} We restrict to $X$ sufficiently general, in the sense that $X$ is a blow-up as in Lemma \ref{blow} with {\em smooth\/} center $S$.
  
 To construct an MCK decomposition for $X$, we apply the general Proposition \ref{blowup}, with $M$ being either $\PP^3\times\PP^1$ or $\operatorname{Gr}(2,4)$, and  $N$ being the K3 surface $S\subset M$ determined by Lemma \ref{blow}. All we need to do is to check that the assumptions of Proposition \ref{blowup} are met with.
 
 Assumption (1) is verified, since both varieties with trivial Chow groups $M$ and K3 surfaces $S$ have an MCK decomposition. For $M$ there is no choice involved ($M$ has a unique CK decomposition which is MCK). For $S$, we choose 
   \begin{equation}\label{k3}  \pi^0_S:=\mathfrak{o}_S\times S\ ,\ \pi^4_S:=S\times \mathfrak{o}_S\ ,\ \pi^2_S:=\Delta_S-\pi^0_S-\pi^4_S\ \ \ \in\ A^2(S\times S)\ ,\end{equation}
   where $\mathfrak{o}_S\in A^2(S)$ is the distinguished zero-cycle of \cite{BV}.
 This is an MCK decomposition for $S$ \cite[Example 8.17]{SV}. 
  
 Assumption (4) is trivially satisfied: one has $A^\ast_{hom}(M)=0$ and so (because $\pi^j_M$ acts as zero on $H^{2i}(M)$ for $j\not=2i$) one has $A^\ast_{}(M)=A^\ast_{(0)}(M)$.
 
 To check assumptions (2) and (3), we consider things family-wise. That is, we write
   \[ \bar{B}:= \PP H^0\bigl( M, \LLL^{\oplus 2}\bigr) \ ,\]
   where the line bundle $\LLL$ is either $\OO_M(2,1)$ (in case $M=\PP^3\times\PP^1$) or $\OO_M(2)$ (in case $M=\operatorname{Gr}(2,4)$),
   and we consider the universal complete intersection
   \[\bar{\Ss}\ \to\ \bar{B}\ .\] 
   We write $B_0\subset\bar{B}$ for the Zariski open parametrizing smooth dimensionally transversal intersections, and $\Ss\to B_0$ for the base change (so the fibres $S_b$ of $\Ss\to B_0$ are exactly the K3 surfaces that are the centers of the blow-up occurring in Lemma \ref{blow}).
   We now make the following claim:
   
   \begin{claim}\label{gfc} Let $\Gamma\in A^i(\Ss)$ be such that 
     \[ \Gamma\vert_{S_b}=0\ \ \ \hbox{in}\ H^{2i}(S_b)\ \ \ \forall b\in B_0\ .\]
     Then also
      \[ \Gamma\vert_{S_b}=0\ \ \ \hbox{in}\ A^i(S_b)\ \ \ \forall b\in B_0\ .\]
     \end{claim}
     
     We argue that the claim implies that assumptions (2) and (3) of Proposition \ref{blowup} are met with (and thus Proposition \ref{blowup} can be applied to prove Theorem \ref{main}).
     Indeed, let $p_j\colon \Ss\times_{B_0} \Ss\to \Ss$, $j=1,2$, denote the two projections.
     We observe that
     \[ \pi^0_\Ss:={1\over 24} (p_1)^\ast  c_2(T_{\Ss/{B_0}})\ ,\ \ \pi^4_\Ss:={1\over 24} (p_2)^\ast  c_2(T_{\Ss/{B_0}})\ ,\ \ \pi^2_\Ss:= \Delta_\Ss-\pi^0_\Ss-\pi^4_\Ss\ \ \ \in A^4(\Ss\times_{B_0} \Ss) \]
     defines a ``relative MCK decomposition'', in the sense that for any $b\in B_0$, the restriction $\pi^i_\Ss\vert_{S_b\times S_b}$ defines an MCK decomposition for $S_b$ which agrees with (\ref{k3}).
     
     Let us now check that assumption (2) is satisfied. Since $A^1(S_b)=A^1_{(0)}(S_b)$, we only need to consider $c_2$ of the normal bundle. That is,
     we need to check that for any $b\in B_0$ there is vanishing
     \begin{equation}\label{need} (\pi^2_{S_b})_\ast c_2(\NNN_{S_b/M})\stackrel{??}{=}0\ \ \ \hbox{in}\ A^2(S_b)\ .\end{equation}
     But we can write
      \[  (\pi^2_{S_b})_\ast c_2(\NNN_{S_b/M})  =  \Bigl(  (\pi^2_\Ss)_\ast c_2(\NNN_{\Ss/(M\times B_0)})\Bigr)\vert_{S_b}    \ \ \ \hbox{in}\ A^2(S_b)\ \]
      (for the formalism of relative correspondences, cf. \cite[Chapter 8]{MNP}),
      and besides we know that $  (\pi^2_{S_b})_\ast c_2(\NNN_{S_b/M})$ is homologically trivial ($\pi^2_{S_b}$ acts as zero on $H^4(S_b)$). Thus, Claim 
      \ref{gfc} implies the necessary vanishing (\ref{need}). 
      
     Assumption (3) is checked similarly. Let $\iota_b\colon S_b\to M$ and $\iota\colon \Ss\to M\times B$ denote the inclusion morphisms.
     To check assumption (3), we need to convince ourselves of the vanishing    
      \begin{equation}\label{need2} (\pi^\ell_{S_b\times M})_\ast (\Gamma_{\iota_b})\stackrel{??}{=}0\ \ \ \hbox{in}\ A^4(S_b\times M)\ \ \ \forall \ell\not= 8\ ,\ \forall b\in B_0\ .\end{equation}
      
     Since $\Gamma_{\iota_b}\in A^4(S_b\times M)$, one knows that $ (\pi^\ell_{S_b\times M})_\ast (\Gamma_{\iota_b})$ is homologically trivial for any
     $\ell\not=8$. Furthermore, we can write the cycle we are interested in as the restriction of a universal cycle:
      \[  (\pi^\ell_{S_b\times M})_\ast (\Gamma_{\iota_b}) =   \Bigl( (\sum_{j+k=\ell} \pi^j_\Ss\times \pi^k_{M}) _\ast (\Gamma_\iota)\Bigr)\vert_{S_b\times M}
      \ \ \ \hbox{in}\ A^4(S_b\times M)\ .\]
      For any $b\in B_0$, there is a commutative diagram
      \[  \begin{array}[c]{ccc}
              A^4(\Ss\times M) &\to&    A^4(S_b\times M)      \\
              &&\\
            \ \ \   \downarrow{\cong}&&\ \ \  \downarrow{\cong}\\
            &&\\
             \bigoplus A^\ast(\Ss) &\to&  \bigoplus A^\ast(S_b)\\
             \end{array} \]
             where horizontal arrows are restriction to a fibre, and
             where vertical arrows are isomorphisms because $M$ has trivial Chow groups.
            Claim \ref{gfc} applied to the lower horizontal arrow shows the vanishing (\ref{need2}), and so assumption (3) holds.
            
            It is only left to prove the claim. Since $A^i_{hom}(S_b)=0$ for $i\le 1$, the only non-trivial case is $i=2$. 
            Given $\Gamma\in A^2(\Ss)$ as in the claim, let $\bar{\Gamma}\in A^2(\bar{\Ss})$ be a cycle restricting to $\Gamma$.
            We consider the two projections
             \[ \begin{array}[c]{ccc}
       \bar{\Ss}&\xrightarrow{\pi}& M  \\
       \ \ \ \ \downarrow{\scriptstyle \phi}&&\\
         \ \  \bar{B}\ &&\\
          \end{array}\]  
      Since any point of $M$ imposes exactly one condition on $\bar{B}$, the morphism $\pi$ has the structure of a projective bundle. As such, any 
      $\bar{\Gamma}\in A^{2}(\bar{\Ss})$ can be written
          \[ \bar{\Gamma}=    \sum_{\ell=0}^2 \pi^\ast( a_\ell)  \cdot \xi^\ell  \ \ \ \hbox{in}\ A^{2}(\bar{\Ss})\ ,\]
                where $a_\ell\in A^{2-\ell}( M)$ and $\xi\in A^1(\bar{\Ss})$ is the relative hyperplane class.
                  
        Let $h:=c_1(\OO_{\bar{B}}(1))\in A^1(\bar{B})$. There is a relation
        \[  \phi^\ast(h)=\alpha \xi +  \pi^\ast(h_1)\ \ \ \hbox{in}\ A^1(\bar{\Ss})\ ,\]
        where $\alpha\in\QQ$ and $h_1\in A^1(M)$. As in \cite[Proof of Lemma 1.1]{PSY}, one checks that $\alpha\not=0$ (if $\alpha$ were $0$, we would have
        $\phi^\ast(h^{\dim \bar{B}})=\pi^\ast(h_1^{\dim \bar{B}})$, which is absurd since $\dim \bar{B}>4$ and so the right-hand side must be $0$).
        Hence, there is a relation
        \[ \xi = {1\over \alpha} \bigl(\phi^\ast(h)-\pi^\ast(h_1)\bigr)\ \ \ \hbox{in}\ A^1(\bar{\Ss})\ .\]        
      For any $b\in B_0$, the restriction of $\phi^\ast(h)$ to the fibre $S_b$ vanishes, and so it follows that
      \[ \bar{\Gamma}\vert_{S_b} = a_0^\prime\vert_{S_b}\ \ \ \hbox{in}\ A^{2}(S_b)\ \]
     for some $a_0^\prime\in A^2( M)$. But
       $A^2( M)$ is generated by intersections of divisors in case $M=\PP^3\times\PP^1$, and $A^2(M)$ is generated by divisors and $c_2$ of the tautological bundle in case $M=\operatorname{Gr}(2,4)$. In both cases, it follows that
                   \[ \bar{\Gamma}\vert_{S_b} = a_0^\prime\vert_{S_b} \ \ \in\ \QQ [{\mathfrak{o}}_{S_b}]\ \ \ \subset\ A^{2}(S_b)\ ,\]
  (in the second case, this is proven as in \cite[Proposition 2.1]{PSY}). 
      Given ${\Gamma}\in A^2({\Ss})$ a cycle such that the fibrewise restriction has degree zero, this shows that the fibrewise restriction is zero in $A^2(S_b)$. 
 Claim \ref{gfc} is proven.
 
 \medskip
 \noindent
 {\it Step 2:} It remains to extend to {\em all\/} smooth hypersurfaces as in the theorem. That is, let $B\subset\bar{B}$ be the open such that the Fano fourfold $X_b$ (which is the blow-up of $M$ with center $S_b$) is smooth. One has $B\supset B_0$. Let $\XX\to B$ and $\XX^0\to B_0$ denote the universal families of Fano fourfolds over $B$ resp. $B_0$. 
 
 From step 1, one knows that $X_b$ has an MCK decomposition for any $b\in B_0$. A closer look at the proof reveals more: the family $\XX^0\to B_0$ has a
 ``universal MCK decomposition'', in the sense that there exist relative correspondences $\pi^\ast_{\XX^0}\in A^4(\XX^0\times_{B_0}\XX^0)$ such that for each $b\in B_0$ the restriction $\pi^\ast_{X_b}:=\pi^\ast_{\XX^0}\vert_b\in A^4(X_b\times X_b)$ forms an MCK decomposition for $X_b$. (To see this, one observes that Proposition \ref{blow} is ``universal'': given families $\MM\to B$, $\NNN\to B$ and universal MCK decompositions $\pi^\ast_\MM$, $\pi^\ast_\NNN$, the result of (\ref{pimck}) is a universal MCK decomposition for $\wt{\MM}\to B$.)
 
 A standard argument now allows to spread out the MCK property from $B_0$ to the larger base $B$. That is, we define
   \[ \pi^j_\XX:= \bar{\pi}^j_{\XX^0}\ \ \in \ A^4(\XX\times_B \XX)\ \]
  (where $\bar{\pi}$ refers to the closure of a representative of $\pi$). The ``spread lemma'' \cite[Lemma 3.2]{Vo} (applied to $\XX\times_B \XX$) gives that the $\pi^\ast_\XX$ are a fibrewise CK decomposition, and the same spread lemma (applied to $\XX\times_B \XX\times_B \XX$) gives that the $\pi^\ast_\XX$ are a fibrewise MCK decomposition. This ends step 2. 
 \end{proof}
 
 \begin{remark}\label{franch} Claim \ref{gfc} states that the families $\Ss\to B_0$ verify the ``Franchetta property'' as studied in \cite{FLV}. It is worth mentioning that the Franchetta property for the universal K3 surface of genus $g\le 10$ (and for some other values of $g$) was already proven in \cite{PSY}; the families considered in Claim \ref{gfc} are different, however, so Claim \ref{gfc} is not covered by \cite{PSY} (e.g., in case $M=\PP^3\times\PP^1$ the K3 surfaces of Claim \ref{gfc} have Picard number at least $2$, so they correspond to a Noether--Lefschetz divisor in $\FF_7$).
 
 As a corollary of Claim \ref{gfc}, the universal families $\XX\to B$ of Fano fourfolds of type B1 or B2 also verify the Franchetta property. (Indeed, in view of \cite[Lemma 3.2]{Vo} it suffices to prove this for $\XX^0\to B_0$.
 In view of Lemma \ref{blow}, $\XX^0$ can be
 constructed as the blow-up of $M\times B_0$ with center $\Ss$. This blow-up yields a relative correspondence from $\XX^0$ to $\Ss$, inducing a commutative diagram
   \[ \begin{array}[c]{ccc}
        A^j(\XX^0) &\to& A^{j-1}(\Ss)\oplus \bigoplus \QQ\\
        &&\\
        \downarrow&&\downarrow\\
        &&\\
        A^j(X_b) &\to& A^{j-1}(S_b)\oplus \bigoplus \QQ\\ 
        \end{array}\]
        where horizontal arrows are injective (by the blow-up formula).
        The Franchetta property for $\Ss\to B_0$ thus implies the Franchetta property for $\XX^0\to B_0$.)
 \end{remark}

 \begin{remark} One would expect that for Fano varieties of K3 type, there is a {\em unique\/} MCK decomposition. I cannot prove this unicity for the Fano fourfolds of Theorem \ref{main}. (At least, one may observe that for $X$ as in Theorem \ref{main} the induced splitting of the Chow ring is canonical, since $A^j(X)=A^j_{(0)}(X)$ for $j\not=3$, whereas $A^3(X)=A^3_{(0)}(X)\oplus A^3_{(2)}(X)$ with $A^3_{(0)}=A^2(X)\cdot A^1(X)$ and $A^3_{(2)}(X)=A^3_{hom}(X)$). 
 \end{remark}

 \section{Some consequences} 
  
  \subsection{An injectivity result}
  
 \begin{corollary}\label{cor} Let $X$ and $M$ be as in Theorem \ref{main}, and let $m\in\NN$. Let $R^\ast(X^m)\subset A^\ast(X^m)$ be the $\QQ$--subalgebra
   \[ \begin{split} R^\ast(X^m):=  < (p_i)^\ast A^1(X), (p_i)^\ast A^2(X), (p_{ij})^\ast(\Delta_X), (p_i)^\ast c_j(T_X),& \\
                                    (p_i)^\ast \ima\bigl( A^i(M)\to A^i(X)&\bigr)>\ \ \ \subset\ A^\ast(X^m)\ .\\
                                    \end{split}\]
   (Here $p_i\colon X^m\to X$ and $p_{ij}\colon X^m\to X^2$ denote projection to the $i$th factor, resp. to the $i$th and $j$th factor.)
   
  The cycle class map induces injections
   $ R^j(X^m) \hookrightarrow H^{2j}(X^m)$
   in the following cases:
   
   \begin{enumerate}
   
   \item $m=1$ and $j$ arbitrary;
   
   \item $m=2$ and $j\ge 5$;
   
   \item $m=3$ and $j\ge 9$.
   \end{enumerate}
       \end{corollary}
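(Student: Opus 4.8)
The plan is to prove the stronger statement that for $j\geq 4m-3$ the cycle class map is injective on the \emph{entire} graded piece $A^j_{(0)}(X^m)$ defined by the product MCK decomposition, and then to observe that $R^\ast(X^m)$ is contained in $A^\ast_{(0)}(X^m)$. First I would record that $X^m$ has an MCK decomposition: by Theorem \ref{main} each factor does, so by Proposition \ref{product} the product MCK decomposition equips $A^\ast(X^m)$ with a bigrading compatible with the intersection product. Next I would check that every generator of $R^\ast(X^m)$ lies in the $(0)$-part. For the divisor and $2$-cycle generators this is $A^1(X)=A^1_{(0)}(X)$ and $A^2(X)=A^2_{(0)}(X)$, both consequences of the vanishing $A^i_{hom}(X)=0$ for $i\neq 3$ (which follows from the blow-up formula together with $A^3_{hom}(S)=0$ for the K3 surface $S$); for the Chern classes it is part of Theorem \ref{main}; for the classes $(p_{ij})^\ast(\Delta_X)$ it is the standard fact that each summand $\pi^k_X$ of $\Delta_X=\sum_k\pi^k_X$ lies in $A^4_{(0)}(X\times X)$; and for $\ima\bigl(A^i(M)\to A^i(X)\bigr)$ it follows because these classes come from the Tate summand $\hh(M)$ of the motive of $X$. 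Multiplicativity of the bigrading then gives $R^\ast(X^m)\subset A^\ast_{(0)}(X^m)$.

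The heart of the argument is a weight count on $A^j_{(0)}(X^m)$. Using the blow-up decomposition $\hh(X)=\hh(M)\oplus\hh(S)(-1)$ together with the Beauville--Voisin splitting of the K3 motive, I would write $\hh(X)=T\oplus\ttt(S)(-1)$ in $\MM_{\mathrm{rat}}$, where $T$ is a direct sum of Tate motives and $\ttt(S)$ is the transcendental motive of $S$; the only non-Tate summand sits in $\hh^4(X)$ and contributes exactly $A^3_{(2)}(X)=A^3_{hom}(X)$. Expanding $\hh(X^m)=\hh(X)^{\otimes m}$ and sorting the summands by the number $t$ of tensor factors equal to $\ttt(S)(-1)$, a direct computation of the Tate twists shows that the grading-$0$ contribution of a summand with $t$ transcendental factors is $A^t\bigl(\ttt(S)^{\otimes t}\bigr)$, sitting in codimension $j=2t+e$, where $e\in[0,4(m-t)]$ is the total half-degree of the Tate factors.

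Two observations then finish the count. The term $t=1$ contributes $A^1(\ttt(S))=0$, since there are no homologically trivial divisors on a K3 surface; and any term with $t\geq 2$ satisfies $j=2t+e\leq 4m-2t\leq 4m-4$. Consequently, for $j\geq 4m-3$ the only surviving contribution to $A^j_{(0)}(X^m)$ is the purely Tate part ($t=0$), on which the cycle class map is injective; hence it is injective on $R^j(X^m)\subset A^j_{(0)}(X^m)$. Taking $m=1,2,3$ gives the bounds $4m-3=1,5,9$, which are exactly the three asserted cases (for $m=1$ this covers all $j$).

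I expect the main obstacle to be the weight bookkeeping in the second paragraph---correctly tracking the Tate twists coming from the factors $\hh^{b_k}(X)$ so as to pin down both the codimension $j=2t+e$ and the identification of the grading-$0$ piece with $A^t(\ttt(S)^{\otimes t})$---since it is precisely this count that simultaneously kills the $t=1$ term and confines every $t\geq 2$ term to codimension $\leq 4m-4$. The confinement is what forces the stated ranges: the $t\geq 2$ terms may genuinely fail to inject, as the homologically trivial part of $\mathrm{End}(\ttt(S))$ need not vanish, so one cannot hope for injectivity in codimensions $j\leq 4m-4$ without further input.
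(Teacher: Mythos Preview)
Your two-step strategy—first show $R^\ast(X^m)\subset A^\ast_{(0)}(X^m)$, then prove injectivity of the cycle class map on $A^j_{(0)}(X^m)$ in the stated range—is exactly the paper's approach, and your weight count via the transcendental motive $\ttt(S)$ is a clean, self-contained variant of what the paper does (the paper reduces to the cited fact that $A^i_{(0)}(S^m)\hookrightarrow H^{2i}(S^m)$ for $i\geq 2m-1$, which is itself proved by essentially the same bookkeeping).

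One imprecision to fix: your justification that $\ima\bigl(A^i(M)\to A^i(X)\bigr)\subset A^i_{(0)}(X)$ ``because these classes come from the Tate summand $\hh(M)$ of the motive of $X$'' is not quite right as written. Here $M$ is the ambient $5$-fold, and $\hh(M)$ is \emph{not} a summand of $\hh(X)$; what the blow-up formula gives is $\hh(X)\cong \hh(Z)\oplus \hh(S)(-1)$ with $Z$ the $4$-fold being blown up, and the restriction map from $M$ does not factor through the summand $\hh(Z)$ in any obvious way. The paper handles this step via the Franchetta property for the universal family. An easier fix, sufficient here, is to observe that $A^\ast(M)$ is generated as a ring by classes of codimension $\leq 2$ (hyperplane classes, and in the Grassmannian case $c_2$ of the tautological bundle), so its image in $A^\ast(X)$ lies in the subring generated by $A^1(X)\cdot A^2(X)\subset A^\ast_{(0)}(X)$.
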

 
 \begin{proof} Theorem \ref{main}, in combination with Proposition \ref{product}, ensures that $X^m$ has an MCK decomposition, and so $A^\ast(X^m)$ has the structure of a bigraded ring under the intersection product. The corollary is now implied by the conjunction of the two following claims:

\begin{claim}\label{c1} There is inclusion
  \[ R^\ast(X^m)\ \ \subset\ A^\ast_{(0)}(X^m)\ .\]
  \end{claim}
  
 \begin{claim}\label{c2} The cycle class map induces injections
   \[ A^j_{(0)}(X^m)\ \hookrightarrow\ H^{2j}(X^m)\ \] 
   provided $m=1$, or $m=2$ and $j\ge 5$, or $m=3$ and $j\ge 9$.
\end{claim}

To prove Claim \ref{c1}, we note that $A^k_{hom}(X)=0$ for $k\not= 3$, which readily implies the equality $A^k(X)=A^k_{(0)}(X)$ for $k\not= 3$. The fact that $c_3(T_X)$ is in $A^3_{(0)}(X)$ is part of Theorem \ref{main}. The fact that $\Delta_X\in A^4_{(0)}(X\times X)$ is a general fact for any $X$ with a (necessarily self-dual) MCK decomposition \cite[Lemma 1.4]{SV2}. It remains to prove that codimension three
cycles coming from the ambient space $M$ are in $A^3_{(0)}(X)$. To this end, we observe that such cycles are universally defined, i.e.
  \[ \ima\Bigl( A^3(M)\to A^3(X)\Bigr)\ \ \subset\ \ima\Bigl( A^3(\XX)\ \to\ A^3(X)\Bigr)\ ,\]
  where $\XX\to B$ is the universal family as before, and $X=X_{b_0}$ for some $b_0\in B$. Given $a\in A^3(\XX)$, applying the Franchetta property (Remark \ref{franch}) to 
   \[ \Gamma:= (\pi^j_\XX)_\ast(a)\ \ \ \in A^3(\XX)\ ,\ \ \ j\not=6\ ,\]
   one finds that the restriction $a\vert_X\in A^3(X)$ lives in $A^3_{(0)}(X)$. In particular, it follows that
   \[  \ima\Bigl( A^3(M)\to A^3(X)\Bigr)\ \ \subset\ A^3_{(0)}(X)\ ,\]   
   as desired.
Since the projections $p_i$ and $p_{ij}$ are pure of grade $0$ \cite[Corollary 1.6]{SV2}, and $A^\ast_{(0)}(X^m)$ is a ring under the intersection product, this proves Claim \ref{c1}.

To prove Claim \ref{c2}, we observe that Manin's blow-up formula \cite[Theorem 2.8]{Sc} gives an isomorphism of motives
  \[ h(X)\cong h(S)(1)\oplus \bigoplus {\mathds{1}}(\ast)\ \ \ \hbox{in}\ \MM_{\rm rat}\ .\]
  Moreover, in view of Proposition \ref{blowup} (cf. also \cite[Proposition 2.4]{SV2}), the correspondence inducing this isomorphism is of pure grade $0$.
  
 In particular, for any $m\in\NN$ we have isomorphisms of Chow groups
  \[ A^j(X^m)\cong A^{j-m}(S^m)\oplus \bigoplus_{k=0}^4  A^{j-m+1-k}(S^{m-1})^{b_k} \oplus \bigoplus A^\ast(S^{m-2})\oplus \bigoplus_{\ell\ge 3} A^\ast(S^{m-\ell})  \ ,  \] 
  and this isomorphism respects the $A^\ast_{(0)}()$ parts. Claim \ref{c2} now follows from the fact that for any surface $S$ with an MCK decomposition, and any $m\in\NN$, the cycle class map induces injections
  \[ A^i_{(0)}(S^m)\ \hookrightarrow\ H^{2i}(S^m)\ \ \ \forall i\ge 2m-1\ \]
  (this is noted in \cite[Introduction]{V6}, cf. also \cite[Proof of Lemma 2.20]{acs}).

\end{proof}

\subsection{Decomposition in the derived category}
 Given a smooth projective morphism $\pi\colon \XX\to B$, Deligne \cite{Del} has proven a decomposition in the derived category of sheaves of $\QQ$-vector spaces on $B$:
    \begin{equation}\label{del}  R\pi_\ast\QQ\cong \bigoplus_i  R^i \pi_\ast\QQ[-i]\ .\end{equation}
    As explained in \cite{Vdec}, for both sides of this isomorphism there is a cup-product: on the right-hand side, this is the direct sum of the usual cup-products of local systems, while on the left-hand side, this is the derived cup-product (inducing the usual cup-product in cohomology). In general, the isomorphism (\ref{del}) is {\em not\/} compatible with these cup-products, even after shrinking the base $B$ (cf. \cite{Vdec}). In some rare cases, however, there is such a compatibility (after shrinking): this is the case for families of abelian varieties \cite{DM}, and for families of K3 surfaces \cite{Vdec}, \cite[Section 5.3]{Vo} (cf. also \cite[Theorem 4.3]{V6} and \cite[Corollary 8.4]{FTV} for some further cases).
    
Given the close link to K3 surfaces, it is not surprising that the Fano fourfolds of Theorem \ref{main} also have such a multiplicative decomposition: 
   
 \begin{corollary}\label{deldec} Let $\XX\to B$ be a family of Fano fourfolds of type B1 or B2. There is a non-empty Zariski open $B^\prime\subset B$, such that the isomorphism (\ref{del}) becomes multiplicative after shrinking to $B^\prime$.
 \end{corollary}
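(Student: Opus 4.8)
The plan is to deduce Corollary \ref{deldec} from the generically defined relative MCK decomposition produced along the way to Theorem \ref{main}, via Voisin's criterion for the multiplicativity of the Deligne decomposition.

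First I would recall what Step 2 of the proof of Theorem \ref{main} actually furnishes: not merely a fibrewise MCK decomposition, but a \emph{generically defined} one. That is, there exist relative self-correspondences $\pi^i_\XX\in A^4(\XX\times_B\XX)$ whose restriction $\pi^i_{X_b}:=\pi^i_\XX\vert_{X_b\times X_b}$ to each fibre is the MCK decomposition of $X_b$ constructed there. Passing to cohomology classes and shrinking $B$ so that each $R^i\pi_\ast\QQ$ is a local system and the Leray spectral sequence degenerates, the classes $[\pi^i_\XX]\in H^8(\XX\times_B\XX)$ act as relative idempotents on $R\pi_\ast\QQ$, projecting onto $R^i\pi_\ast\QQ[-i]$; they therefore induce the Deligne decomposition (\ref{del}).

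Next I would invoke the criterion of Voisin \cite{Vdec}, \cite[Section 5.3]{Vo} (cf. also \cite[Theorem 4.3]{V6}, \cite[Corollary 8.4]{FTV}): the induced decomposition (\ref{del}) is multiplicative over a dense open $B^\prime\subset B$ provided the ``wrong'' triple-product classes
\[ \bigl[\pi^k_\XX\bigr]\circ\bigl[\Delta^{sm}_{\XX/B}\bigr]\circ\bigl(\bigl[\pi^i_\XX\bigr]\otimes\bigl[\pi^j_\XX\bigr]\bigr)\ ,\qquad i+j\not=k\ , \]
vanish in the cohomology of the triple fibre product $\XX\times_B\XX\times_B\XX$ over $B^\prime$ (here $\Delta^{sm}_{\XX/B}$ is the relative small diagonal, which computes the cup product on the right-hand side of (\ref{del})). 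By construction each such class is the cohomology class of a relative correspondence whose fibrewise restriction is precisely $\pi^k_{X_b}\circ\Delta^{sm}_{X_b}\circ(\pi^i_{X_b}\otimes\pi^j_{X_b})$; this vanishes in $A^{8}(X_b\times X_b\times X_b)$ by the fibrewise MCK property (Definition \ref{mck}), hence a fortiori in $H^{\ast}(X_b\times X_b\times X_b)$, for every $b$.

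Finally I would upgrade this fibrewise cohomological vanishing to vanishing over a Zariski open. This is exactly the content of the spread lemma \cite[Lemma 3.2]{Vo}: a relative correspondence on $\XX\times_B\XX\times_B\XX$ whose restriction to every fibre is cohomologically trivial is itself cohomologically trivial over some dense open $B^\prime\subset B$. Applying this to the finitely many correspondences above and intersecting the resulting opens yields the simultaneous vanishing over a common $B^\prime$, and hence the multiplicativity of (\ref{del}) over $B^\prime$. The main obstacle is this last transition together with the precise Voisin-type equivalence underlying it: one must know that multiplicativity of (\ref{del}) is genuinely governed by the triple-product classes displayed above, and that fibrewise cohomological vanishing of a globally defined relative cycle spreads out. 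Both are available here only because the fibrewise MCK decomposition is generically defined, a point which itself rests on the Franchetta property established in Claim \ref{gfc} and Remark \ref{franch}.
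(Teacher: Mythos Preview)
Your proposal is correct and is precisely the argument the paper has in mind: the paper's proof is the single sentence ``This is a formal consequence of the existence of a relative MCK decomposition, cf.\ \cite[Proof of Theorem 4.2]{V6} and \cite[Section 8]{FTV}'', and you have unpacked exactly that formal consequence. One small wording issue: in your final step you speak of upgrading ``fibrewise cohomological vanishing'' via \cite[Lemma 3.2]{Vo}, but that spread lemma is a statement about rational equivalence, not cohomology; since you already have the stronger fibrewise vanishing in $A^{8}(X_b^3)$ from the MCK property, you should invoke the lemma at the Chow level (giving $\pi^k_\XX\circ\Delta^{sm}_{\XX/B}\circ(\pi^i_\XX\otimes\pi^j_\XX)=0$ in $A^\ast$ over some $B'$), and then pass to cohomology.
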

 
 \begin{proof} This is a formal consequence of the existence of a relative MCK decomposition, cf. \cite[Proof of Theorem 4.2]{V6} and \cite[Section 8]{FTV}.
 \end{proof}
   
Given a family $\XX\to B$ and $m\in\NN$, let us write $\XX^{m/B}$ for the $m$-fold fibre product 
  \[ \XX^{m/B}:=\XX\times_B\XX\times_B\cdots\times_B \XX\ .\]
  Corollary \ref{deldec} has the following concrete consequence, which is similar to a result for families of K3 surfaces obtained by Voisin \cite[Proposition 0.9]{Vdec}:
  
  \begin{corollary}\label{deldec2} Let $\XX\to B$ be a family of Fano fourfolds of type B1 or B2.   
 Let $z\in A^r(\XX^{m/B})$ be a polynomial in (pullbacks of) divisors and codimension $2$ cycles on $\XX$.  
  Assume the fibrewise restriction $z\vert_b$ is homologically trivial, for some $b\in B$. Then there exists a non-empty Zariski open $B^\prime\subset B$
  such that
    \[ z=0\ \ \ \hbox{in}\ H^{2r}\bigl((\XX^\prime)^{m/B^\prime},\QQ\bigr)\ .\]
    \end{corollary}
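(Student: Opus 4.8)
The plan is to deduce Corollary \ref{deldec2} from the multiplicative decomposition established in Corollary \ref{deldec}, exactly as Voisin does for families of K3 surfaces. First I would observe that the cycle $z$, being a polynomial in pullbacks of divisors and codimension $2$ cycles on $\XX$, lies in the subring of $A^\ast(\XX^{m/B})$ generated by classes of small grade. More precisely, divisors and codimension $2$ cycles on a fibre $X_b$ lie in $A^\ast_{(0)}(X_b)$ (recall from the proof of Claim \ref{c1} that $A^k(X)=A^k_{(0)}(X)$ for $k\neq 3$, so in particular $A^1$ and $A^2$ are entirely of grade $0$). Since the relative MCK decomposition is multiplicative and the projections $\XX^{m/B}\to\XX$ are of pure grade $0$, the product $z$ again lies in the grade-$0$ part $A^\ast_{(0)}((\XX)^{m/B})$ of the relative Chow group.

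Next I would invoke the translation between the grading and Deligne's decomposition. The content of Corollary \ref{deldec} is that, after shrinking $B$ to some $B^\prime$, the isomorphism (\ref{del}) for $\XX^{m/B}\to B$ is compatible with cup-products; equivalently, the relative MCK decomposition induces a multiplicative splitting of $R\pi_\ast\QQ$. Under this dictionary, a relative cycle of pure grade $0$ maps into the summand $R^{2r}\pi_\ast\QQ[-2r]$ governed by the ``lowest'' piece of the Leray filtration. The key point, standard in this circle of ideas (cf. \cite{Vdec}, \cite[Theorem 4.2]{V6}), is that for a grade-$0$ relative cycle the fibrewise cohomology class determines the class in $H^\ast$ of the total space over $B^\prime$: if the restriction $z\vert_b$ vanishes in $H^{2r}(X_b^m)$ for one (hence, by local constancy, for all nearby) $b$, then the corresponding section of the local system $R^{2r}\pi_\ast\QQ$ vanishes, and multiplicativity guarantees there is no contribution from the other Leray graded pieces.

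Concretely, I would argue as follows. The multiplicative decomposition (\ref{del}) for $(\XX^\prime)^{m/B^\prime}\to B^\prime$ gives a ring isomorphism on the level of cohomology of the total space that respects the Leray filtration and its splitting. A grade-$0$ cycle $z$ has Leray-graded pieces concentrated in a single summand; its image in $H^{2r}((\XX^\prime)^{m/B^\prime},\QQ)$ is detected by the associated section of $R^{2r}\pi_\ast\QQ$, which is precisely the fibrewise cohomology class $b\mapsto z\vert_b$. The hypothesis $z\vert_b=0$ in $H^{2r}(X_b^m)$ forces this section to be zero on a neighbourhood, hence (after shrinking so the section is globally zero) the whole class $z$ vanishes in $H^{2r}((\XX^\prime)^{m/B^\prime},\QQ)$.

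The main obstacle, and the step requiring the most care, is the second one: justifying that the multiplicative splitting of Corollary \ref{deldec} really does isolate grade-$0$ cycles in a single Leray summand so that fibrewise triviality propagates to triviality over the base. This is where the multiplicativity of (\ref{del}) is essential — without it, a fibrewise-trivial class could a priori acquire nonzero components in lower Leray pieces through cup-products, and the conclusion would fail (as it does for general families). I would handle this by citing the compatibility established in Corollary \ref{deldec} together with the formal argument of \cite[Proposition 0.9]{Vdec} and \cite[Proof of Theorem 4.2]{V6}, which carry out precisely this propagation for the multiplicative case; the present situation differs only in replacing the K3 family by the Fano family $\XX\to B$, for which the relative MCK decomposition has already been constructed in the proof of Theorem \ref{main}.
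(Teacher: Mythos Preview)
Your route is different from the paper's, and as written it has a gap at the first step.

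The paper follows Voisin's argument in \cite[Proposition 0.9]{Vdec} \emph{entirely in cohomology}: each divisor $d_i$ and each codimension-$2$ class $e_j$ on $\XX$ is decomposed via the Leray spectral sequence as $d_i=d_{i0}+\pi^\ast(d_{i2})$ and $e_j=e_{j0}+\pi^\ast(e_{j2})+\pi^\ast(e_{j4})$. The substantive work is the lemma that the ``base'' pieces $d_{i2},e_{j2},e_{j4}$ are \emph{algebraic} cycle classes, proved by pushing and pulling against powers of a relatively ample class $h_0$. One then takes $B^\prime$ to be the complement of the supports of these algebraic cycles, so that over $B^\prime$ one has $d_i=d_{i0}$ and $e_j=e_{j0}$ in cohomology; multiplicativity of the decomposition (Corollary \ref{deldec}) then forces $z=p(d_{i0},e_{j0})\in H^0(B^\prime,R^{2r}(\pi^m)_\ast\QQ)$, whence fibrewise vanishing implies global vanishing. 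Your proposal never touches this algebraicity step, which is the actual content of the proof you cite.

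Your alternative, via the relative Chow grading, can be made to work but the implication you use is not automatic. From $d\vert_b\in A^1_{(0)}(X_b)$ for all $b$ you cannot conclude $d\in A^1_{(0)}(\XX)$ for the \emph{relative} grading: the cycles $(\pi^j_\XX)_\ast d$ for $j\neq 2$ are only known to restrict to zero on each fibre, hence are supported over a proper closed subset of $B$. You must invoke a spread/localization argument and shrink $B$ to kill them before you can say $d$ (and likewise $e$) is relatively of pure grade $0$. Once that is done, multiplicativity of the relative MCK gives $z\in A^r_{(0)}(\XX^{m/B^\prime})$, and since the decomposition of Corollary \ref{deldec} is induced by the relative projectors, the class $[z]$ indeed lies in $H^0(B^\prime,R^{2r}(\pi^m)_\ast\QQ)$, and your conclusion follows. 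So your approach is a legitimate and arguably more conceptual alternative to the paper's (it bypasses the algebraicity-of-Leray-pieces computation), but the missing shrinking step is exactly where the nontrivial input enters, and you should make it explicit rather than asserting that fibrewise grade $0$ implies relative grade $0$.
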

    
    \begin{proof} The argument is the same as \cite[Proposition 0.9]{Vdec}. First, one observes that divisors $d_i$ and codimension $2$ cycles $e_j$ on $\XX$ admit a cohomological decomposition (with respect to the Leray spectral sequence)
        \[ \begin{split}  d_i&= d_{i0} + \pi^\ast(d_{i2})\ \ \ \hbox{in}\ H^0(B, R^2\pi_\ast\QQ)\oplus \pi^\ast H^2(B,\QQ) \cong H^2(\XX,\QQ)\ ,\\
                                e_j&= e_{j0} +\pi^\ast(e_{j2}) +\pi^\ast(e_{j4})\ \ \ \hbox{in}\  H^0(B, R^4\pi_\ast\QQ)\oplus \pi^\ast H^2(B)^{\oplus 2}  \oplus \pi^\ast H^4(B) \cong H^4(\XX,\QQ)\ .\\  
                        \end{split}\]
      We claim that the cohomology classes $d_{ik}$ and $e_{jk}$ are {\em algebraic\/}. This claim implies the corollary: indeed, given a polynomial
      $z=p(d_i,e_j)$, one may take $B^\prime$ to be the complement of the support of 
      the cycles $d_{i2}$, $e_{j2}$ and $e_{j4}$. Then over the restricted base one has equality
      \[  z:=p(d_i,e_j)= p(d_{i0},e_{j0})\ \ \ \hbox{in}\ H^{2r}\bigl((\XX^\prime)^{m/B^\prime},\QQ\bigr)\ .\]
      Multiplicativity of the decomposition ensures that (after shrinking the base some more)
      \[  p(d_{i0},e_{j0})\ \ \in\ H^0(B^\prime, R^{2r}(\pi^m)_\ast\QQ)\ \ \subset\ H^{2r}\bigl((\XX^\prime)^{m/B^\prime},\QQ\bigr)\ ,\]
      and so the conclusion follows.
      
      The claim is proven for divisor classes $d_i$ in \cite[Lemma 1.4]{Vdec}. For codimension $2$ classes $e_j$, the argument is similar to loc. cit.:
      let $h\in H^2(\XX)$ be an ample divisor class, and let $h_0$ be the part that lives in $H^0(B,R^2\pi_\ast\QQ)$. One has
      \[ e_j (h_0)^4 =  e_{j0}  (h_0)^4 +\pi^\ast(e_{j2})  (h_0)^4+\pi^\ast(e_{j4})  (h_0)^4\ \ \ \hbox{in}\ H^{12}(\XX,\QQ)\ .\]
      By multiplicativity, after some shrinking of the base the first two summands are contained in $H^0(B^\prime, R^{12}\pi_\ast\QQ)$, resp. in $H^2(B^\prime, R^{10}\pi_\ast\QQ)$, hence they are zero as $\pi$ has $4$-dimensional fibres. The above equality thus simplifies to
      \[ e_j (h_0)^4 =    \pi^\ast(e_{j4})  (h_0)^4\ \ \ \hbox{in}\ H^{12}(\XX,\QQ)\ .\]
      Pushing forward to $B^\prime$, one obtains
      \[ \pi_\ast (e_j (h_0)^4)= \pi_\ast \bigl((h_0)^4\bigr) e_{j4} = \lambda\, e_{j4}\ \ \ \hbox{in}\ H^4(B^\prime)\ ,\]
      for some $\lambda\in\QQ^\ast$. As the left-hand side is algebraic, so is $e_{j4}$.
      
      Next, one considers
       \[  e_j (h_0)^3 =  e_{j0}  (h_0)^3 +\pi^\ast(e_{j2})  (h_0)^3+\pi^\ast(e_{j4})  (h_0)^3\ \ \ \hbox{in}\ H^{10}(\XX,\QQ)\ .\]      
       The first summand is again zero for dimension reasons, and so
       \[ \pi^\ast(e_{j2})  (h_0)^3 =    e_j (h_0)^3 -  \pi^\ast(e_{j4})  (h_0)^3\ \ \in\ H^{10}(\XX,\QQ)\ \]
       is algebraic. A fortiori, $\pi^\ast(e_{j2})  (h_0)^4$ is algebraic, and so
       \[ \pi_\ast ( \pi^\ast (e_{j2})  (h_0)^4) =    \pi_\ast \bigl((h_0)^4\bigr) e_{j2} = \mu\, e_{j2}\ \ \ \hbox{in}\ H^2(B^\prime)\ ,  \ \ \ \mu\in\QQ^\ast\ ,\]          
       is algebraic.
      \end{proof}

\vskip1cm
\begin{nonumberingt} Thanks to two referees for constructive comments. Thanks to Len-boy from pandavriendjes.fr.

\end{nonumberingt}

\vskip1cm

\end{document}